\theoremstyle{plain}\numberwithin{equation}{section}
\newtheorem{theorem}{Theorem}[section]
\newtheorem{prop}[theorem]{Proposition}
\newtheorem{lm}[theorem]{Lemma}
\newtheorem{cor}[theorem]{Corollary}
\newtheorem{conj}[theorem]{Conjecture}
\theoremstyle{definition}
\newtheorem{defn}[theorem]{Definition}
\newtheorem{rmk}[theorem]{Remark}
\newtheorem{eg}[theorem]{Example}
\newcommand{\ul} \underline
\newcommand{\mtx}{\left[ \begin{matrix}}
\newcommand{\mtxend}{\end{matrix}\right]}
\newcommand{\rank}{\mbox{rank}}
\newcommand{\state}{\mbox{State}}
\newcommand{\conv}{\mbox{conv}}
\newcommand{\vol}{\mbox{vol}}
\newcommand{\A}{\mathcal A}
\newcommand{\G}{\mathcal G}
\newcommand{\U}{\mathcal U}
\newcommand{\C}{\mathcal C}
\newcommand{\<}{\prec}
\begin{document}

\title[Gr\"obner bases of Scrolls]{On the Universal Gr\"obner bases of varieties of minimal degree}

\author[S. Petrovi\'c]{Sonja Petrovi\'c}
\address{Department of Mathematics, University of Kentucky, Lexington, KY 40506, USA}
\email{{\tt petrovic@ms.uky.edu}}

\begin{abstract}
A universal Gr\"obner basis of an ideal is the union of all its reduced Gr\"obner bases.  It is contained in the Graver basis, the set of all primitive elements.
Obtaining an explicit description of either of these sets, or even a sharp degree bound for their elements, is a nontrivial task.\\
In their '95 paper, Graham, Diaconis and Sturmfels give a nice combinatorial description of the Graver basis for any rational normal curve in terms of primitive
partition identities.  Their result is extended here to rational normal scrolls.  The description of the Graver bases of scrolls
is given in terms of {\em{colored}} partition identities.  This leads to a sharp bound on the degree of Graver basis elements, which is always attained by a circuit.\\
Finally, for any variety obtained from a scroll by a sequence of projections to some of the coordinate hyperplanes, the degree of any element in any reduced
Gr\"obner basis is bounded by the degree of the variety.
\end{abstract}

\maketitle

\subsubsection*{Acknowledgment.} The author would like to thank Bernd Sturmfels for suggesting a generalization of primitive partition identities.

\section{Introduction}

Fix a subset $\A=\{a_1,\dots,a_n\}$ of $\mathbb Z^d$.  The set $\A$ determines a toric ideal in the following way.
Every vector $u\in\mathbb Z^n$ can be written uniquely as $u=u^+-u^-$ where $u^+$ and $u^-$ are nonnegative and have disjoint support.
Considering $\A$ as a $d\times n$ matrix induces a parametrization of a variety $X:=X_\A$ whose defining ideal is the toric ideal $I_\A:=(x^{u^+}-x^{u^-} : Au=0)$ in
the polynomial ring $k[{\bf{x}}]:=k[x_1,\dots,x_n]$.  We may write $I_X$ for $I_\A$.
A standard reference on toric ideals is \cite{St}.

Given any term order $\<$ on the monomials of $k[{\bf{x}}]$, the initial ideal of $I_\A$ is defined to be $in_{\<}(I_\A) := (in_{\<}(f):f\in I_\A)$.
Any generating set $\G_\<$ of the ideal such that $in_{\<}(I_\A) = (in_{\<}(g):g\in\G_\<)$ is called a {\emph{Gr\"obner basis}}.  If $\G_\<$ is {\emph{reduced}}, that is, no term of $g$
is divisible by $in_{\<}(f)$ for any $f,g\in \G_\<$, then $\G_\<$ is uniquely determined by the term order $\<$.
For applications of Gr\"obner bases, see \cite{LeHoTh} and \cite{St}.

 The union of the (finitely many) reduced Gr\"obner bases
of $I_\A$ is called the {\emph{universal Gr\"obner basis}} and denoted $\U_\A$.  It is contained in the set of primitive binomials called the {\emph{Graver basis}} $\G r_\A$ of $I_\A$;
 a binomial $x^{u^+}-x^{u^-}\in I_\A$ is called primitive if there is no $x^{v^+}-x^{v^-}\in I_\A$ such that $x^{v^+}|x^{u^+}$ and $x^{v^-}|x^{u^-}$.
 A set of primitive binomials with minimal support is the set $\C_\A$ of {\emph{circuits}} of the ideal.  It is known that $\C_\A \subset \U_\A \subset \G r_\A$.
 In general, both containments are proper.

There exists a general bound on the degrees of the elements of the universal Gr\"obner basis (\cite{St}), however it is far too large for many specific examples.
One might expect the sharp bound to be smaller for varieties that are special in some sense.

 Rational normal scrolls are examples of varieties of {\emph{minimal degree}}, that is, the varieties which attain the general lower bound $\deg(X) \geq \mbox{codim}(X)+1$.
They have been classified (\cite{EisHa}) as quadratic hypersurfaces, rational normal scrolls, the Veronese surface in $\mathbb P^5$, and cones over these.
 The scrolls are the only infinite family among these.
Their defining ideals have the following special property:
\begin{theorem}[Corollary \ref{degUnivS}]
The degree of any binomial in the Graver basis (and the universal Gr\"obner basis) of any rational normal scroll is bounded above by the degree of the scroll.
\end{theorem}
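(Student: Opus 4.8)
The plan is to deduce the bound from the combinatorial description of the Graver basis developed in the paper, which extends the Graham--Diaconis--Sturmfels theorem from rational normal curves to scrolls. Because $\C_\A\subseteq\U_\A\subseteq\G r_\A$, it suffices to bound the degrees occurring in $\G r_\A$. First I would set up the dictionary between Graver binomials and \emph{colored partition identities}: writing the columns of $\A$ as the lattice vectors $\mathbf e_i+j\,\mathbf e_0\in\mathbb Z^{r+1}$, where the color $i$ (with $1\le i\le r$) records the ruling and the value $j$ (with $0\le j\le a_i$) the position along it, a binomial lies in $I_\A$ exactly when its two monomials use, for each color $i$, the same number $n_i$ of parts and the same total value, and it is primitive exactly when no proper nonempty sub-collection of its parts satisfies these same two balance conditions. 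As the binomial is homogeneous its degree is the number of parts on one side, $N=\sum_i n_i$, so the assertion reduces to the purely combinatorial claim that a primitive colored partition identity satisfies $N\le\sum_i a_i=\deg(X)$.

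The engine for the combinatorial estimate is a zero-sum reformulation: attach to each part the vector $\mathbf e_i+j\,\mathbf e_0$ with a plus sign on one side and a minus sign on the other, so that the balance conditions say exactly that the $2N$ signed vectors sum to zero and primitivity says that no proper nonempty subsequence sums to zero. The key observation is that for \emph{any} ordering of the vectors a repeated partial sum exhibits the block between the two equal sums as a proper nonempty zero-sum subsequence, i.e.\ a sub-identity; hence primitivity forces all $2N$ partial sums to be distinct. This already pins down a lot of structure---for instance every color that is actually used must have different value-sums on the two sides, since otherwise that color by itself would be a sub-identity---and running a pigeonhole on the $(r+1)$-dimensional partial sums bounds $N$.

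The main obstacle is that the colors are coupled only through the single value coordinate, so one cannot argue color by color: a given color may legitimately carry more than $a_i$ parts provided another color compensates through the value coordinate, and the forbidden sub-identity must therefore be extracted in the full $(r+1)$-dimensional state space rather than inside one color. Pinning the constant to exactly $\deg(X)$ is the delicate point, and I would settle it by passing to circuits, which is also what yields the ``attained by a circuit'' sharpness of the companion result. A circuit is supported on at most $r+2$ of the vectors $\mathbf e_i+j\,\mathbf e_0$, and its degree equals the $\gcd$-reduced tuple of maximal minors of the corresponding $(r+1)\times(r+2)$ submatrix; this quantity is bounded by the normalized volume of $\conv(\A)$, and since $\deg(X)$ equals that normalized volume, every circuit has degree at most $\deg(X)$. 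The genuinely hard step is showing, via the colored partition identity analysis, that a maximal-degree primitive binomial can always be replaced by a circuit of the same degree---this is where the coupling between colors has to be confronted---after which the volume estimate gives $N\le\deg(X)$ for the whole Graver basis, with equality already for rational normal curves and for the scrolls $S(1,a)$.
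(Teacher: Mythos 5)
Your setup (reduce to the Graver basis, translate Graver elements into color-homogeneous primitive colored partition identities, and observe that primitivity forces all partial sums in a signed-vector reformulation to be distinct) matches the paper's framework through Proposition \ref{GraverPcpiLm}. But the core of your argument has a genuine gap: you bound only \emph{circuits} (via Cramer's rule and the normalized-volume interpretation of maximal minors, which is fine), and then rest everything on the claim that ``a maximal-degree primitive binomial can always be replaced by a circuit of the same degree.'' You label this the genuinely hard step but give no mechanism for it, and it cannot be assumed: that the maximal Graver degree is attained by a circuit is \emph{false} for general toric ideals (the paper's own remark after Theorem \ref{degGraverS} emphasizes this), and for scrolls it is derived in the paper as a \emph{consequence} of the degree bound, not used as an input --- so your plan is circular at exactly the point where the content lies. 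Your fallback, a pigeonhole on the $(r+1)$-dimensional partial sums, also cannot deliver the constant: the number of available lattice values for those partial sums is on the order of a product of coordinate ranges, far exceeding $\sum_i a_i = \deg(X)$, so distinctness of partial sums alone gives a much weaker bound. (Your side remark that any used color must have unequal value-sums on the two sides is also only correct when the identity involves more than one color; otherwise the ``sub-identity'' is the whole identity.)

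What the paper does instead, and what your sketch is missing, is a reduction to dimension one \emph{before} counting: color-homogeneity gives $k_i=s_i$ for each color, so one can pair terms within each color and form the differences $d_{i,j}=a_{i,j}-b_{i,j}$ with $\sum_{i,j} d_{i,j}=0$; primitivity of the original identity makes this difference identity primitive as a (one-dimensional, inhomogeneous) partition identity, so Sturmfels's sum--difference algorithm applies and bounds the degree $k_1+\dots+k_c$ by $D_+ + D_-$, the sum of the largest positive and largest negative differences. The two-color inequality chain
\begin{align*}
1+D_+ +1 \;\leq\; 1+D_+ + b_P \;=\; 1+a_P \;\leq\; 1+n_P \;\leq\; a_Q+n_P \;=\; b_Q - D_- + n_P \;\leq\; n_Q - D_- + n_P
\end{align*}
then yields $D_+ + D_- \leq n_P+n_Q-2$, where $n_P, n_Q$ are the two largest block sizes; since $n_P+n_Q-2 = (n_P-1)+(n_Q-1) \leq \sum_i (n_i-1) = \deg(S)$ for $c\geq 2$ (and the $c=1$ case is Graham--Diaconis--Sturmfels), the corollary follows. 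The circuit enters only afterwards, in the sharpness analysis: equality throughout the chain forces the identity into the explicit two-colored shape supported on $\{1_P, n_P, 1_Q, n_Q\}$, which is a circuit precisely when $\gcd(n_P-1,n_Q-1)=1$. To repair your proof you would need to supply the pairing-and-differences reduction (or some substitute for it); the circuit route by itself cannot close the argument.
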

We also derive a sharp bound (Theorem \ref{degGraverS}), which is usually much smaller then the degree of the scroll.

Another remarkable property of the defining ideals of rational normal scrolls is that their Graver bases admit a particularly nice combinatorial description.
Namely, each primitive element in the ideal of a scroll corresponds to a suitable {\emph{primitive colored partition identity}}.
Such a characterization of primitive elements imposes a restriction on the {\emph{structure}} of any binomial in the universal Gr\"obner bases of scrolls.

The paper is organized as follows.
Section $2$ contains the necessary information about the defining ideals of rational normal scrolls.  In Section $3$ we introduce colored partition
identities and use them to characterize the Graver bases of the scrolls (Proposition \ref{GraverPcpiLm}),
generalizing the result for rational normal curves in \cite{GDS}. 
Section $4$ contains the degree bounds.  An important consequence of the sharp bound in Theorem \ref{degGraverS} is that if  $X$ is any variety that
can be obtained from a scroll by a sequence of projections to some of the coordinate hyperplanes, then the degree of the variety
 gives an upper bound on the degrees of elements in the universal Gr\"obner basis of its defining ideal $I_X$.
In the final section, we conjecture that the universal Gr\"obner basis equals the Graver basis for any scroll, and discuss its consequences.
  We also derive the dimension of the state polytopes of scrolls. \\

\section{Parametrization of Scrolls}
Let $S:=S(n_1-1,\dots,n_c-1)$ be the rational normal scroll in $\mathbb P^{n_1+\dots +n_c -c-1}$. Its defining ideal is given by $I_2 M$, where
\begin{align*}
M:=[ M_{n_1} | M_{n_2} | \dots | M_{n_c} ],\mbox{  and  } M_{n_j}:=\mtx x_{j,1} &\dots &x_{j,n_j-1} \\ x_{j,2} &\dots &x_{j,n_j}\mtxend.
\end{align*}
If $c=1$, then the $2$-minors of the matrix above give the defining ideal of a rational normal curve $S(n-1)$ in $\mathbb P^{n-1}$ (\cite{EisHa}).

\begin{lm}{\label{parametrization}}
 $I_S=\ker \varphi$, where $\varphi(x_{j,i}) =
[v_1^1,\dots, v_j^1,v_{j+1}^0,\dots, v_c^0, t^i ]^T $ for $1\leq j\leq c$.  That is, the matrix $\A$ that encodes the parametrization of the scroll $S$
is
\begin{align*}\A = \mtx   1  &\dots &1 &1 &\dots &1 &\dots &1 &\dots &1  \\ 0  &\dots &0 &1 &\dots &1 &\dots &1 &\dots &1 \\ \vdots  & & & & & & & & &\vdots
\\ 0 &\dots &0 &0&\dots &0 &\dots &1 &\dots &1 \\ 1 &2 \dots &n_1 &1&\dots &n_2 &\dots &1 &\dots &n_c \mtxend .\end{align*}
\end{lm}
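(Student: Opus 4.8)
The plan is to prove the equality of the two ideals by combining the trivial inclusion with a dimension count, exploiting that both are prime. Note first that $\ker\varphi$ is automatically prime, being the kernel of a ring homomorphism into the Laurent polynomial ring $k[v_1^{\pm},\dots,v_c^{\pm},t^{\pm}]$, which is a domain; and $I_S=I_2M$ is prime because it is the defining ideal of the irreducible scroll $S$ (the classical fact recorded above, \cite{EisHa}). Since two primes of $k[\mathbf{x}]$, one contained in the other and cutting out varieties of equal dimension, must coincide, it suffices to establish $I_2M\subseteq\ker\varphi$ together with $\dim V(\ker\varphi)=\dim V(I_2M)$.

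For the inclusion I would compute $\varphi$ on a generator directly. A generating $2\times 2$ minor of $M$ is formed from a column $(x_{j,\ell},x_{j,\ell+1})^T$ of $M_{n_j}$ and a column $(x_{k,m},x_{k,m+1})^T$ of $M_{n_k}$, namely $x_{j,\ell}x_{k,m+1}-x_{j,\ell+1}x_{k,m}$. Using $\varphi(x_{j,i})=v_1\cdots v_j\,t^{i}$, both monomials map to $(v_1\cdots v_j)(v_1\cdots v_k)\,t^{\ell+m+1}$, so their difference lies in $\ker\varphi$; the telescoping cancellation works verbatim when $j=k$. Hence $I_2M\subseteq\ker\varphi$.

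For the dimension count I would use that $\dim V(\ker\varphi)=\rank\A$. Reading off the rows of $\A$, the first $c$ rows are the indicator vectors of the nested column sets $\{(j,i):j\ge k\}$ for $1\le k\le c$, and by an upper-triangular unipotent change of basis their row space equals the span of the $c$ block-sum functionals $\ell_j:u\mapsto\sum_i u_{j,i}$. The last row is the weighted-degree functional $u\mapsto\sum_{j,i} i\,u_{j,i}$, which is non-constant on each block and therefore independent of the $\ell_j$. Thus $\rank\A=c+1$, so $\dim V(\ker\varphi)=c+1$, matching the dimension of the affine cone over the $c$-dimensional scroll $S=V(I_2M)$. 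As $V(\ker\varphi)\subseteq V(I_2M)$ are irreducible of the same dimension, they are equal, and passing to the (prime, hence radical) ideals gives $I_2M=\ker\varphi$.

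The only nontrivial input is the reverse inclusion, and here it comes for free from the dimension count once one invokes the classical primality and dimension of the scroll. The hard part would appear only if one insisted on a self-contained toric proof avoiding \cite{EisHa}: then the work shifts to showing that the minors form a Markov basis for $\A$, i.e.\ that their moves connect any two monomials lying in a common fiber of $\A$. The kernel description extracted above---that every block sum $\sum_i u_{j,i}$ vanishes and the total weighted degree $\sum_{j,i} i\,u_{j,i}$ vanishes---reduces this to a straightening/sorting argument within and across blocks, and that is the step I expect to require genuine effort. This same description is also what will drive the colored partition identities of Section~3.
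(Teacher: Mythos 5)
Your proof is correct and follows essentially the same route as the paper's: the paper also verifies $I_2M\subseteq\ker\varphi$ by checking the defining $2$-minors directly (via the corresponding column identity for $\A$) and then concludes by a dimension count from $\rank\A=c+1$ against the dimension of the scroll. You merely make explicit the primality and prime-containment bookkeeping that the paper leaves implicit, which is a fair elaboration rather than a different argument.
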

\begin{proof}
Indeed, let the generators of $I_S$ be the minors $$m_{i,j,k,l}:=x_{i,k}x_{j,l+1}-x_{j,l}x_{i,k+1}$$ for $1\leq i,j\leq c$, $1\leq k\leq n_i-1$, and $1\leq l\leq
n_j-1$.  (Note that we allow $i=j$ and $k=l$.) Then the exponent vector of $m_{i,j,k,l}$ is $$v_{i,j,k,l}=[0,\dots, 0, 1, 0,\dots ,0, -1, 0,\dots ,0, -1,
0,\dots, 0, 1, 0,\dots ,0] $$ where the positive entries are in columns $n_1+\dots+n_{i-1}+k$ and $n_1+\dots+n_{j-1}+l+1$, while the negative entries are in
columns $n_1+\dots+n_{j-1}+l$ and $n_1+\dots +n_{i-1}+k+1$.  (If $i=j$ and $k=l$, then the two locations for the negative entries coincide; in that case, the
negative entry is $-2$.)  Denote by $\A_{c}$ the $c^{th}$ column of $\A$.  Then clearly $$\A_{n_1+\dots+n_{i-1}+k}+\A_{n_1+\dots+n_{j-1}+l+1}= \A_{n_1+\dots+n_{j-1}+l}+ \A_{n_1+\dots +n_{i-1}+k+1}$$ since $$ \mtx 1\\ \vdots\\ \vdots\\ 1\\ 0\\ \vdots\\ 0\\k
\mtxend + \mtx 1\\ \vdots\\ 1\\ 0\\ \vdots\\ \vdots\\ 0\\ l+1 \mtxend = \mtx 1\\ \vdots\\ \vdots\\ 1\\ 0\\ \vdots\\ 0\\k+1 \mtxend + \mtx  1\\ \vdots\\ 1\\ 0\\
\vdots\\ \vdots\\ 0\\ l \mtxend .$$ Thus $m_{i,j,k,l}\in I_{\A}$ for each generator $m_{i,j,k,l}$ of $I_S$.

In addition, the matrix $\A$ has full rank; thus the dimension of the variety it parametrizes is $\rank \A -1 = c$.  But this is precisely the
dimension of the scroll $S$.  
\end{proof}

\begin{eg}
The ideal of the scroll $S(3,2)$ is $I_{\A_{S(3,2)}}$ where
$$\A_{S(3,2)}=\mtx 1&1&1&1&1&1&1\\0&0&0&0&1&1&1\\1&2&3&4&1&2&3 \mtxend.$$
\end{eg}
\vspace{0.1in}

\section{Colored partition identities and Graver bases}

Let us begin by generalizing the definitions from Chapter $6$ of \cite{St}.
\begin{defn}
A {\bf{ colored partition identity}} (or a {\bf{cpi}}) in the colors $(1)$,$\dots$,$(c)$ is an identity of the form
\begin{align*}
\textcolor{blue}{a_{1,1}}+\dots+\textcolor{blue}{a_{1,k_1}}+\textcolor{red}{a_{2,1}}+\dots+\textcolor{red}{a_{2,k_2}}+ & \dots + \textcolor{green}{a_{c,1}}+\dots+\textcolor{green}{a_{c,k_c}} = \\
\textcolor{blue}{b_{1,1}}+\dots+\textcolor{blue}{b_{1,s_1}}+\textcolor{red}{b_{2,1}}+\dots+\textcolor{red}{b_{2,s_2}}+ & \dots + \textcolor{green}{b_{c,1}}+\dots+\textcolor{green}{b_{c,s_c}},
 \tag{*}\label{cpi}
\end{align*}
where $1\leq a_{p,j},b_{p,j} \leq n_p$ are positive integers for all $j$, $1\leq p\leq c$ and some positive integers $n_1$, $\dots$, $n_c$.
\end{defn}

If $c=1$ then this is precisely the definition of the usual partition identity with $n=n_1$.

\begin{rmk}
A cpi in $c$ colors with $n_1$, $\dots$, $n_c$ as above is a partition identity (in one color) with largest part $n=\max\{n_1,\dots ,n_c\}$.
\end{rmk}

\begin{eg}
Denote by $i_r$ the number $i$ colored red, and by $i_b$ the number $i$ colored blue.  Then
\begin{align*} \textcolor{red}{1_r} + \textcolor{red}{4_r} + \textcolor{blue}{3_b} = \textcolor{blue}{5_b} + \textcolor{blue}{1_b} + \textcolor{red}{2_r}
\end{align*}
is a colored partition identity with two colors, with $n_1= 4$ and $n_2=5$.  Erasing the coloring 
gives $1 + 4 + 3 = 5 + 1 + 2$, a (usual) partition identity with largest part $n=5$.
\end{eg}

\begin{defn}
A colored partition identity (\ref{cpi}) is a {\bf{ primitive }} cpi (or a {\bf{ pcpi}}) if there is no proper sub-identity $a_{-,i_1}+\dots+a_{-,i_l} = b_{-,j_1}+\dots
+b_{-,j_t}$, with $1\leq l+t < k_1+\dots+k_c+s_1+\dots+s_c $, which is a cpi.

A cpi is called {\bf{homogeneous}} if $k_1+\dots +k_c = s_1+\dots +s_c$.  If $k_j=s_j$ for $1\leq j\leq c$, then it is called {\bf{color-homogeneous}}.
The {\bf{degree}} of a pcpi is the number of summands $ k_1+\dots +k_c+s_1+\dots +s_c$.
\end{defn}

Note that color-homogeneity implies homogeneity, and that a homogeneous pcpi need not be primitive in the inhomogeneous sense.

\begin{eg}
Here is a list of all primitive color-homogeneous partition identities with $c=2$ colors and $n_1=n_2=3$.
\begin{align*}
1_1 +3_1 &= 2_1 +2_1 \\
1_1 +\textcolor{blue}{2_2} &= 2_1 +\textcolor{blue}{1_2} \\
1_1 +1_1 +\textcolor{blue}{3_2} &= 2_1 +2_1 +\textcolor{blue}{1_2} \\
1_1 +\textcolor{blue}{3_2} &= 2_1 +\textcolor{blue}{2_2}  \\
2_1 +\textcolor{blue}{3_2} &= 3_1 +\textcolor{blue}{2_2} \\
2_1 +\textcolor{blue}{2_2} &= 3_1 +\textcolor{blue}{1_2} \\
1_1 +\textcolor{blue}{3_2} &= 3_1 +\textcolor{blue}{1_2}\\
\textcolor{blue}{1_2} +\textcolor{blue}{3_2} &= \textcolor{blue}{2_2} +\textcolor{blue}{2_2} \\
1_1 +\textcolor{blue}{3_2} +\textcolor{blue}{3_2} &= 3_1 +\textcolor{blue}{2_2} +\textcolor{blue}{2_2} \\
1_1 +\textcolor{blue}{2_2} +\textcolor{blue}{2_2} &= 3_1 +\textcolor{blue}{1_2} +\textcolor{blue}{1_2}\\
2_1 +2_1 +\textcolor{blue}{3_2}&= 3_1 +3_1 +\textcolor{blue}{1_2}
\end{align*}
\end{eg}

We are now ready to relate the ideals of scrolls and the colored partition identities.
\begin{lm}
A binomial $x_{1,a_{1,1}}\dots x_{1,a_{1,k_1}} \dots x_{c,a_{c,1}}\dots x_{c,a_{c,k_c}} - x_{1,b_{1,1}} \dots x_{c,b_{c,s_c}}$ is in the ideal
$I_{\A_{S(n_1-1,\dots,n_c-1)}}$ if an only if (\ref{cpi}) is a color-homogeneous cpi.
\end{lm}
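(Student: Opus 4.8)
The plan is to reduce everything to the defining property of the toric ideal. Recall (see \cite{St}) that for an arbitrary pair of monomials $x^\alpha$, $x^\beta$ one has $x^\alpha-x^\beta\in I_{\A}$ if and only if $\A\alpha=\A\beta$, where $\alpha,\beta\in\mathbb Z_{\geq 0}^n$ are the exponent vectors. Here $\alpha$ is the exponent vector of $x_{1,a_{1,1}}\cdots x_{c,a_{c,k_c}}$ and $\beta$ that of $x_{1,b_{1,1}}\cdots x_{c,b_{c,s_c}}$, so that $\A\alpha=\sum_{p=1}^c\sum_{j=1}^{k_p}\A_{(p,a_{p,j})}$ and $\A\beta=\sum_{p=1}^c\sum_{j=1}^{s_p}\A_{(p,b_{p,j})}$, each column appearing with the multiplicity with which the corresponding variable divides the monomial.

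The key computation is to evaluate these two vectors in $\mathbb Z^{c+1}$ coordinatewise. By Lemma \ref{parametrization} the column $\A_{(p,i)}$ attached to the variable $x_{p,i}$ has its first $p$ entries equal to $1$, its entries $p+1,\dots,c$ equal to $0$, and its last entry equal to $i$. Hence for $1\leq m\leq c$ the $m$-th coordinate of $\A\alpha$ counts exactly those summands whose color $p$ satisfies $p\geq m$, giving $(\A\alpha)_m=\sum_{p\geq m}k_p$, while the last coordinate is $(\A\alpha)_{c+1}=\sum_{p,j}a_{p,j}$, the sum of all parts on the left of (\ref{cpi}); the analogous formulas hold for $\A\beta$ with $s_p$ and $b_{p,j}$. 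Therefore $\A\alpha=\A\beta$ is equivalent to the system $\sum_{p\geq m}k_p=\sum_{p\geq m}s_p$ for $1\leq m\leq c$ together with $\sum_{p,j}a_{p,j}=\sum_{p,j}b_{p,j}$.

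It remains to interpret this system. Reading the $c$ equations from $m=c$ downward, the case $m=c$ gives $k_c=s_c$, and subtracting consecutive equations yields $k_p=s_p$ for every $p$ by descending induction; conversely these equalities obviously imply the whole system. Thus the first $c$ coordinates encode precisely color-homogeneity. The remaining equation $\sum_{p,j}a_{p,j}=\sum_{p,j}b_{p,j}$ says exactly that (\ref{cpi}) is a genuine identity, i.e.\ a cpi. Combining the two, $x^\alpha-x^\beta\in I_{\A}$ if and only if (\ref{cpi}) is a color-homogeneous cpi. The only real bookkeeping is the coordinatewise description of $\A\alpha$ and the descending induction; both are immediate from the staircase shape of $\A$, so I anticipate no serious obstacle.
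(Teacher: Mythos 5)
Your proof is correct and follows essentially the same route as the paper: the paper's one-line proof appeals to the definitions and Lemma \ref{parametrization}, and the example immediately after the lemma carries out exactly your coordinatewise computation (matching the $v_1,\dots,v_c$ exponents to get $k_p=s_p$ and the $t$-exponent to get the partition identity) in the case $c=2$. Your version merely writes out the general-$c$ bookkeeping, including the correct descending induction from $(\A\alpha)_m=\sum_{p\geq m}k_p$, so there is no substantive difference.
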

\begin{proof} This follows easily from the definitions and Lemma \ref{parametrization}.
\end{proof}

\begin{eg}
Let $c=2$. Then $$A:=\A_{S(n_1-1,n_2-1)} =\mtx  1&\dots &1 &1&\dots &1\\ 0&\dots &0&1&\dots &1\\1&\dots &n_1 &1&\dots &n_2 \mtxend$$ and $$I_A =I_2\mtx
x_{1,1} & \dots & x_{1,n_1-1} & x_{2,1} &\dots &x_{2,n_2-1} \\ x_{1,2} &\dots &x_{1,n_1} &x_{2,2} &\dots &x_{2,n_2} \mtxend . $$
Then $x_{1,a_{1,1}}\dots x_{1,a_{1,k_1}}x_{2,a_{2,1}}\dots x_{2,a_{2,k_2}} - x_{1,b_{1,1}} \dots x_{1,b_{1,s_1}} x_{2,b_{2,1}} \dots x_{2,b_{2,s_2}}\in I_A$ if
and only if
$$\mtx  v_1^{k_1+k_2} \\v_2^{0+k_2} \\ t^{a_{1,1}+\dots+a_{2,k_2}} \mtxend = \mtx v_1^{s_1+s_2} \\ v_2^{0+s_2} \\ t^{b_{1,1}+\dots +b_{2,s_2}}\mtxend $$
if and only if $k_1+k_2=s_1+s_2$, $k_2=s_2$, and
$$a_{1,1}+\dots+a_{1,k_1}+a_{2,1}+\dots+a_{2,k_2} =  b_{1,1}+\dots+b_{1,s_1}+b_{2,1}+\dots+b_{2,s_2}. $$
The last equality clearly describes a color-homogeneous pcpi.
\end{eg}

The Lemmas above imply the following characterization of the Graver bases of rational normal scrolls.

\begin{prop}{\label{GraverPcpiLm}}
The Graver basis elements for the scroll $S(n_1-1,\dots,n_c-1)$ are precisely the color-homogeneous primitive colored partition identities of the form
(\ref{cpi}).
\end{prop}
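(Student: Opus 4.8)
The plan is to read primitivity of a binomial directly off the combinatorics of its associated identity, using the dictionary just established. By definition $\G r_\A$ consists of the primitive binomials of $I_\A$, and by the lemma preceding this proposition every binomial of $I_\A$ is of the form treated there and corresponds to a color-homogeneous cpi $P$ as in (\ref{cpi}): the variable $x_{p,i}$ records the part $i$ in color $(p)$, placed on the left or the right according to whether it occurs in $x^{u^+}$ or in $x^{u^-}$. I would fix this bijection once and for all and then show that $x^{u^+}-x^{u^-}$ is primitive if and only if $P$ is a primitive cpi, which is exactly the assertion.

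The implication from pcpi to Graver basis is the easy half. Suppose $x^{u^+}-x^{u^-}$ is not primitive, witnessed by a binomial $x^{v^+}-x^{v^-}\in I_\A$ with $x^{v^+}\mid x^{u^+}$, $x^{v^-}\mid x^{u^-}$ and $0\neq v\neq u$. Applying the preceding lemma to this witness, the parts recorded by $x^{v^+}$ and $x^{v^-}$ form a proper sub-identity of $P$ that is itself a (color-homogeneous) cpi; hence $P$ is not a pcpi. Contrapositively, if $P$ is a primitive colored partition identity then the corresponding binomial lies in $\G r_\A$, and this direction uses the dictionary but no further combinatorics.

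The converse is where I expect the real work. Starting from a proper sub-identity of $P$, one selects submonomials $x^{v^+}\mid x^{u^+}$ and $x^{v^-}\mid x^{u^-}$ with $0\neq v\neq u$ that realize it, and the matching of the two partial sums records the vanishing of the last coordinate of $\A v$. The crux is that $x^{v^+}-x^{v^-}$ belongs to $I_\A$ precisely when $v\in\ker\A$, and reading $\A$ row by row as in Lemma \ref{parametrization} shows this holds exactly when the chosen sub-identity is \emph{color-homogeneous}: the top row of $\A$ forces the two sides to use equally many parts in total, the middle rows force the per-color counts to agree, and the bottom row gives the equality of partial sums. Thus primitivity of the binomial corresponds exactly to the absence of a proper \emph{color-homogeneous} sub-cpi of $P$, and the proposition amounts to the statement that, just as for ordinary partition identities and rational normal curves in \cite{GDS}, this is the notion captured by calling $P$ a primitive colored partition identity. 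Getting this color-homogeneity bookkeeping right, so that the toric and the combinatorial notions of primitivity are matched, is the main obstacle; everything else is an unwinding of the two lemmas above.
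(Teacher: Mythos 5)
Your setup is exactly the unwinding the paper leaves implicit (its own proof is only the assertion that the check ``is not difficult''), and the substantive mathematics in your proposal is right: a witness $x^{v^+}-x^{v^-}$ to non-primitivity of the binomial restricts to a proper sub-identity of $P$ that is a color-homogeneous cpi, and conversely a proper sub-identity of $P$ yields such a witness precisely when it is color-homogeneous, because rows $2,\dots,c$ of $\A$ together with the top row force the per-color counts of the two sides to agree, while the bottom row is the equality of partial sums. Your first half is also valid under any reading of the definitions, since a color-homogeneous sub-cpi is in particular a sub-cpi.

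The genuine gap is that your final step is not ``bookkeeping'' that can be gotten right: with the paper's definition of a pcpi read literally, primitivity forbids \emph{every} proper sub-identity that is a cpi, with no color-homogeneity requirement, and this notion does \emph{not} coincide with the absence of proper color-homogeneous sub-identities --- so the identification you defer to the end is false as stated, in the direction ``Graver $\Rightarrow$ pcpi''. Concretely, for $S(1,1)$ the minor $x_{1,1}x_{2,2}-x_{1,2}x_{2,1}$ is primitive (the columns of $\A$ are distinct, so the ideal contains no linear binomials, and any witness would be linear), yet its identity $1_1+2_2=2_1+1_2$ admits the proper sub-cpi $1_1=1_2$, hence is not primitive under the literal definition. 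Your own kernel analysis shows why such cross-color sub-identities must be discarded: they never produce a binomial in $I_\A$. The proposition is therefore correct only when ``primitive'' is taken relative to \emph{color-homogeneous} sub-identities, and that this is the intended reading is confirmed by the paper's own list for $c=2$, $n_1=n_2=3$, which includes $1_1+2_2=2_1+1_2$ and $2_1+3_2=3_1+2_2$ among the pcpis even though each has cross-color sub-identities such as $1_1=1_2$ and $3_2=3_1$. Once that reading is fixed, the equivalence you proved --- primitive binomial if and only if no proper color-homogeneous sub-cpi --- \emph{is} the proposition, and your argument is complete; without fixing it, the last sentence of your proposal is circular and the claimed matching of the toric and combinatorial notions of primitivity cannot be carried out.
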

\begin{proof} With all the tools in hand, it is not difficult to check that the binomial in the ideal of the scroll is primitive if and only if the corresponding colored partition identity
is primitive.
\end{proof}

If $c=1$, this is just the observation in Chapter $6$ of \cite{St}.

\vspace{0.1in}

\section{Degree bounds}

Now we can generalize the degree bound given in \cite{St} for the rational normal curves.  The degree bound is sharp, and it is remarkable that it is always
attained by a circuit.
By a {\emph{subscroll}} of $S(n_1-1,\dots,n_c-1)$ we mean a scroll $S':=S(n'_1-1,\dots,n'_c-1)$ such that $n'_i\leq n_i$ for each $i$.
Clearly, $I_{S'}$ can be obtained from $I_S$ by eliminating variables.
\begin{theorem}\label{degGraverS}
Let $S:=S(n_1-1,\dots,n_c-1)$ for $c\geq 2$. Let $P$ and $Q$ be the indices such that
$$n_P=\max\{n_i: 1\leq i\leq c\}$$ and $$n_Q=\max\{n_j: 1\leq j\leq c, j\neq P\}.$$  Then the degree of any
primitive binomial in $I_S$ is bounded above by $$n_P+n_Q-2.$$  This bound is sharp exactly when $n_P-1$ and $n_Q-1$ are relatively prime.

More precisely, the primitive binomials in $I_S$ have degree at most $$u+v-2,$$
where $u$ and $v$ are maximal integers such that $S(n'_1-1,\dots,n'_c-1)$ is a subscroll of $S$ with $n'_i=u$ and $n'_j=v$ for some $1\leq i,j \leq c$,
and subject to $(u-1,v-1)=1$.

This degree bound is sharp; there is always a circuit having this degree. For any number of colors $c$, such a maximal degree circuit is two-colored.
\end{theorem}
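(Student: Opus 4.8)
The plan is to push everything through Proposition \ref{GraverPcpiLm} and then treat the upper bound and its sharpness by quite different means. By that proposition a primitive binomial of $I_S$ is the same datum as a color-homogeneous pcpi, and its (polynomial) degree is the number $k_1+\dots+k_c$ of parts on one side. First I would pass to shifted values (subtract $1$), so that color $p$ contributes multisets $A_p,B_p\subseteq\{0,\dots,n_p-1\}$ with $|A_p|=|B_p|=k_p$, and color-homogeneity plus the sum condition collapse to the single equation $\sum_p(\Sigma A_p-\Sigma B_p)=0$. In this language Graver-primitivity of the exponent vector $u$ is exactly the statement that $u$ has no nonzero conformal kernel summand, equivalently that the identity has no proper sub-identity which is again a color-homogeneous cpi. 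This reformulation is what makes both directions tractable and is the precise generalization of the set-up of \cite{GDS}.

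For the sharpness claims I would exhibit the extremal element by hand. Fix two distinct colors $i\neq j$ and integers $u\le n_i$, $v\le n_j$ with $\gcd(u-1,v-1)=1$, and consider the binomial $x_{i,1}^{\,v-1}x_{j,v}^{\,u-1}-x_{i,u}^{\,v-1}x_{j,1}^{\,u-1}$ on the four extreme variables $x_{i,1},x_{i,u},x_{j,1},x_{j,v}$. Using Lemma \ref{parametrization}, the color-count rows are automatically balanced (each color appears with equal multiplicity on both sides) and the position row reduces to $uv-1=uv-1$, so the binomial lies in $I_S$. To see it is a \emph{circuit} I would note that the two color tails $\tau_i,\tau_j$ are linearly independent (different supports), so any dependence among the four columns forces $\lambda_1+\lambda_2=0$ and $\lambda_3+\lambda_4=0$, after which the position row gives $\lambda_1(u-1)=-\lambda_3(v-1)$; thus the kernel is one-dimensional, supported on all four columns, and deleting any column leaves an independent triple, so the support is minimal. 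Its degree is $(u-1)+(v-1)=u+v-2$, and $\gcd(u-1,v-1)=1$ is exactly what keeps the spanning kernel vector primitive over $\mathbb Z$ (otherwise it is a $\gcd$-th power of a shorter circuit). Since this binomial lives in the ideal of the coordinate subscroll $S(\dots,u-1,\dots,v-1,\dots)$, where primitivity is inherited by $I_S$, optimizing $u+v-2$ over all admissible coprime pairs realizes the refined bound; taking $u=n_P$, $v=n_Q$ when $(n_P-1,n_Q-1)=1$ yields the clean value $n_P+n_Q-2$. As the extremal element was built on two colors, the maximal-degree circuit is two-colored for every $c$.

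The heart of the matter is the upper bound. To $u$ I would attach the closed walk of length $2(k_1+\dots+k_c)$ in the image lattice $\mathbb Z^{c+1}$ of $\A$ whose steps are the columns $\pm\A_{p,\cdot}$ (signed by the side on which $x_{p,v}$ occurs), taken in an order of our choosing. If two partial sums coincided, the intervening steps would sum to a nonzero conformal element of $\ker\A$, i.e.\ a proper color-homogeneous sub-identity, contradicting primitivity; hence all $2(k_1+\dots+k_c)$ partial sums are distinct. The bound then amounts to choosing an ordering that confines this walk to a region with few lattice points: a balanced interleaving keeping the count-coordinates alternating leaves only the shifted-position coordinate free, and I would argue it can assume at most $g+h$ distinct values, where $g=n_P-1$, $h=n_Q-1$. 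The hard part will be exactly this confinement count for arbitrarily many colors — either via an exchange argument reducing a maximal identity to its two dominant colors, or by confining the full walk directly — together with the fact that the attainable number of partial sums before a forced repetition is a discrete two-slope phenomenon governed by $g$ and $h$, and is largest, equal to $g+h$, precisely when $\gcd(g,h)=1$. It is here that coprimality must enter to upgrade the crude estimate $n_P+n_Q-2$ to the sharp refined value $\max\{u+v-2:\ u\le n_i,\ v\le n_j,\ (u-1,v-1)=1\}$.

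Finally I would assemble the pieces. The confinement gives $\deg\le n_P+n_Q-2$ (and, in its coprimality-sensitive form, $\deg\le u+v-2$) for every element of the Graver basis $\G r_{\A}$, hence of $\U_\A$ and of $\C_\A$. The subscroll circuits of the second paragraph attain the refined value, so the refined bound is sharp; comparing, the crude bound $n_P+n_Q-2$ is attained if and only if one may take $u=n_P$, $v=n_Q$, that is, if and only if $\gcd(n_P-1,n_Q-1)=1$; and the attaining element is a two-colored circuit regardless of $c$. I expect the two upper-bound steps — the conformal distinctness of partial sums (routine, from the definition of Graver/conformal decomposition) and the confinement count (the genuine obstacle) — to be where all the work lies, while the circuit construction and the coprimality bookkeeping are elementary once the extremal binomial is written down.
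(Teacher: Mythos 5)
Your sharpness half is essentially correct, and in fact matches the paper's extremal element: the binomial $x_{i,1}^{v-1}x_{j,v}^{u-1}-x_{i,u}^{v-1}x_{j,1}^{u-1}$ does lie in $I_S$ (both sides have color multiplicities $v-1$ and $u-1$ and position-sum $uv-1$, by Lemma \ref{parametrization}), your four-column rank computation correctly shows the kernel on its support is one-dimensional with every triple of columns independent, so it is a two-colored circuit of degree $u+v-2$, with $\gcd(u-1,v-1)=1$ ensuring the spanning kernel vector is primitive; and primitivity passes from the coordinate subscroll to $S$. The genuine gap is the one you flag yourself: the upper bound is never proved. Your walk in $\mathbb Z^{c+1}$ with pairwise distinct partial sums is a sound observation (a repeated partial sum would produce a conformal, hence color-homogeneous, proper sub-identity), but distinctness of partial sums in a $(c+1)$-dimensional ambient space yields no useful count by itself; the ``confinement count'' you defer \emph{is} the theorem, and neither of the two strategies you gesture at (an exchange argument reducing to two dominant colors, or direct confinement of the full walk) is carried out or obviously executable. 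Note also that the ``sharp exactly when coprime'' claim needs more than your construction: you must show that in the non-coprime case \emph{no} primitive binomial attains degree $n_P+n_Q-2$, and your comparison argument silently invokes the refined upper bound $u+v-2$, i.e.\ precisely the unproven step.

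What the paper does at this point is a dimension-collapsing trick your proposal lacks: color-homogeneity ($k_i=s_i$ for every color $i$) lets one pair the $a$'s and $b$'s within each color and pass to the differences $d_{i,j}=a_{i,j}-b_{i,j}$, which sum to zero and, after separating signs, form a \emph{primitive inhomogeneous} partition identity --- one-dimensional data, the colors surviving only through the ranges $|d_{i,j}|\leq n_i-1$. Sturmfels's sum-difference algorithm (Theorem 6.1 and Corollary 6.2 of \cite{St}) then bounds the number of pairs, i.e.\ the degree of the binomial, by $D_++D_-$, and the elementary chain $1+D_++1\leq 1+D_++b_P=1+a_P\leq 1+n_P\leq a_Q+n_P=b_Q-D_-+n_P\leq n_Q-D_-+n_P$ gives $D_++D_-\leq n_P+n_Q-2$ with no multicolor confinement needed. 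Moreover, the equality analysis of that algorithm forces $b_P=1$, $a_P=n_P$, $a_Q=1$, $b_Q=n_Q$ and pins down the unique candidate maximal-degree identity as the two-colored one you constructed, which is primitive if and only if $\gcd(n_P-1,n_Q-1)=1$; when it is not, eliminating variables and iterating over subscrolls yields the refined bound $u+v-2$. So the missing idea is exactly the within-color differencing reduction to the one-color setting of \cite{GDS} and \cite{St}; as written, your proposal establishes only the attainment (lower) half of the theorem.
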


Before proving the Theorem, let us look at an example.
\begin{eg}
Consider the scroll $S(5,6)$.  Here $n_P-1=6$ and $n_Q-1=5$, and since they are relatively prime, the sharp degree bound is $5+6=11$.
On the other hand, if $S:=S(4,4,2,2)$, then $n_P-1=n_Q-1=4$ so we look for a subscroll $S':=S(4,3,2,2)$. Then $u-1=4$ and $v-1=3$, and the degree of any primitive element
is at most $7$.  Finally, if $S:=S(5,5,5)$, then $n_P-1=n_Q-1=5$. The desired subscroll is $S':=S(5,4,4)$ so that the degree bound is $u-1+v-1=5+4=9$.
\end{eg}

\begin{proof}
Let $x_{1,a_{1,1}}\dots x_{c,a_{c,k_c}} - x_{1,b_{1,1}} \dots x_{c,b_{c,k_c}}\in I_S$. Consider the corresponding color-homogeneous pcpi:
\begin{align*}
a_{1,1}+\dots+a_{1,k_1}+a_{2,1}+\dots+a_{2,k_2}+ & \dots + a_{c,1}+\dots+a_{c,k_c} = \\ b_{1,1}+\dots+b_{1,k_1}+b_{2,1}+\dots+b_{2,k_2}+ & \dots +
b_{c,1}+\dots+b_{c,k_c}.
 \tag{**}\label{hom-pcpi}
\end{align*}
Note that the number of terms on either side of (\ref{hom-pcpi}) equals the degree of the binomial. We shall first show that $k_1+\dots +k_c \leq n_P+n_Q-2$
holds for (\ref{hom-pcpi}).

Let $d_{i,j}=a_{i,j}-b_{i,j}$ be the differences in the $i^{th}$-color entries for $1\leq j\leq k_i$, $1\leq i\leq c$. Then  $$\sum_{\substack{ 1\leq i\leq c\\
1\leq j\leq k_i }}{d_{i,j}=0} .$$ Separating positive and negative terms gives an inhomogeneous pcpi $\sum d_{i,j}^+ = \sum d_{i,j}^-$.  Indeed, if it is not primitive then there would be a
subidentity in (\ref{hom-pcpi}).  Note that an inhomogeneous pcpi is defined to be a ppi with arbitrary coloring. Therefore, the sum-difference
algorithm from the proof of Theorem 6.1. in \cite{St} can be applied.  For completeness, let us recall the algorithm.\\
\phantom{xxx}\texttt{Set }$x:=0,$ $\mathcal P:=\{ d_{i,j}^+ \},$ $\mathcal N:=\{ d_{i,j}^- \}.$\\
\phantom{xxx}\texttt{While } $\mathcal P \cup \mathcal N$ \texttt{ is non-empty do } \\
\phantom{xxxxxx}\texttt{if } $x\geq 0$\\
\phantom{xxxxxxxxx} \texttt{then select an element } $\nu\in\mathcal N$\texttt{, set }$x:=x-\nu$\texttt{ and }$\mathcal N:=\mathcal N\backslash \{\nu\}$\\
\phantom{xxxxxxxxx} \texttt{else select an element } $\pi\in\mathcal P$\texttt{, set }$x:=x+\pi$\texttt{ and }$\mathcal P:=\mathcal P\backslash \{\pi\}$.\\
The number of terms in the pcpi is bounded above by the number of values $x$ can obtain during the run of the algorithm. Primitivity ensures no value is reached
twice. Let $$D_{i,+}:=\max_j \{d_{i,j}: d_{i,j}>0 \}$$ and $$D_{i,-}:=\max_j \{-d_{i,j}: d_{i,j}<0 \}.$$   Then  $k_1+\dots +k_c \leq
\max_i \{D_{i,+}\} + \max_i \{D_{i,-} \} =: D_+ + D_-$ (Corollary 6.2 in \cite{St}). Let $D_+$ and $D_-$ occur in colors $P$ and $Q$, respectively, so that $D_+=a_P-b_P$, and $D_-=b_Q-a_Q$.
Then the sequence of inequalities $$ 1+D_+ +1 \leq 1+D_+ +b_P=1+a_P\leq 1+n_P \leq a_Q + n_P= b_Q - D_- + n_P\leq n_Q-D_- +n_P $$ implies that $$
 D_+ + D_- \leq n_P+n_Q-2, $$ and the degree bound follows.

The maximum degree occurs when there is equality in the above sequence of inequalities, and $x$ reaches every possible value during the run of the algorithm.
Following the argument of Sturmfels, this means that the inhomogeneous pcpi $\sum{d_{i,j}}=0$ is of the form $$\underbrace{D_+ + \dots + D_+}_{D_- \mbox{ terms}}
= \underbrace{D_- + \dots D_-}_{D_+ \mbox{ terms}}.$$
 In addition, $$ 1+D_+ +1 =1+D_+ +b_P=1+a_P= 1+n_P = a_Q + n_P= b_Q - D_- + n_P=n_Q-D_- +n_P $$ implies
that $b_P=1$, $a_P=n_P$, $a_Q=1$, and $b_Q=n_Q$.  Therefore, the maximal degree identity $\sum{d_{i,j}}=0$ provides that (\ref{hom-pcpi}) is of the following
form: $$ \underbrace{1_P+\dots +1_P}_{n_Q-1 \mbox{ terms}} + \underbrace{n_Q+\dots +n_Q}_{n_P-1 \mbox{ terms}} = \underbrace{1_Q+\dots +1_Q}_{n_P-1 \mbox{
terms}} + \underbrace{n_P+\dots +n_P}_{n_Q-1 \mbox{ terms}}, $$ where $1_P$ denotes the number $1$ colored using the color $P$.
This colored partition identity is primitive if and only if there does not exist a proper subidentity if and only if $n_P-1$ and $n_Q-1$ are relatively prime.
Indeed, if $n_P-1=zy$ and $n_Q-1=zw$ for some $z,y,w\in\mathbb N$, then there is a subidentity of the form
$$ \underbrace{1_P+\dots +1_P}_{w \mbox{ terms}} + \underbrace{n_Q+\dots +n_Q}_{y \mbox{ terms}} = \underbrace{1_Q+\dots +1_Q}_{y \mbox{ terms}} + \underbrace{n_P+\dots +n_P}_{w \mbox{ terms}}.$$
Furthermore, assume that $n_P-1$ and $n_Q-1$ are relatively prime.  Then the exponent vector of the binomial corresponding to the maximal degree identity has
support of  cardinality four. It is thus a circuit for any $c\geq 2$.  Clearly, it is a two-colored circuit, regardless of the number of colors $c$ in
our scroll
 $S$.

Finally, if $n_P-1$ and $n_Q-1$ are not relatively prime, the degree $n_Q+n_P-2$ cannot be attained by a primitive binomial. In that case, we may simply
eliminate one of the variables to obtain a smaller scroll, say $S':=S(n_1-1,\dots, n_P-2, \dots, n_c-1)$, whose defining ideal is embedded in that of $S$
(that is, $u:=n_P-1$ and $v:=n_Q$). Clearly, primitive binomials from $I_{S'}$ lie in $I_S$. If $u-1$ and $v-1$ are relatively prime, then we have the
smaller bound for the degree: $n_P+n_Q-3$. If not, we continue eliminating variables until the condition is satisfied.

This completes the proof.
\end{proof}

\begin{rmk}
In view of the comment on p.36 of \cite{St}, it is interesting to note that in the case of varieties of minimal degree, the maximum degree of any Graver basis element
is attained by a circuit. This is not true in general.
\end{rmk}

Now the following is trivial.
\begin{cor}\label{degUnivS}
The degree of any binomial in the Graver basis (and the universal Gr\"obner basis) of any rational normal scroll is bounded above by the degree of the scroll.
\end{cor}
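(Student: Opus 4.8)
The plan is to deduce the corollary directly from the sharp Graver estimate of Theorem \ref{degGraverS} together with the degree formula for scrolls. Since $\U_\A \subseteq \G r_\A$, it suffices to bound the degree of an arbitrary Graver basis element, that is, of an arbitrary primitive binomial of $I_S$. For $c\geq 2$ this is exactly what Theorem \ref{degGraverS} provides: every primitive binomial has degree at most $n_P+n_Q-2$, where $n_P$ and $n_Q$ are the two largest among $n_1,\dots,n_c$. Thus the whole content of the corollary is the comparison of the quantity $n_P+n_Q-2$ with $\deg(S)$.

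Next I would pin down $\deg(S)$. The cleanest route uses that $S(n_1-1,\dots,n_c-1)$ is a variety of minimal degree, so $\deg(S)=\mbox{codim}(S)+1$; by Lemma \ref{parametrization} the matrix $\A$ has full rank $c+1$, whence $\dim S=\rank\A-1=c$, and since the ambient projective space carries one coordinate for each of the variables $x_{j,i}$ its dimension is $n_1+\dots+n_c-1$. This yields the familiar value
$$\deg(S)=\mbox{codim}(S)+1=(n_1-1)+\cdots+(n_c-1),$$
equivalently the standard formula $\deg S(a_1,\dots,a_c)=a_1+\cdots+a_c$ with $a_i=n_i-1$ (see \cite{EisHa}).

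With this the inequality is immediate. Rewriting the Graver bound as $n_P+n_Q-2=(n_P-1)+(n_Q-1)$ exhibits it as the sum of the two largest entries of the list of nonnegative integers $n_1-1,\dots,n_c-1$, which is at most the sum of all of them, namely $\deg(S)$. Combining this with $\U_\A\subseteq\G r_\A$ establishes the claim for every scroll with $c\geq 2$; note that equality $n_P+n_Q-2=\deg(S)$ occurs exactly when $c=2$, and the inequality is strict for $c\geq 3$.

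The one point requiring separate treatment — and the step I expect to be the main, if minor, obstacle — is the boundary case $c=1$, which Theorem \ref{degGraverS} explicitly excludes. Here $S(n_1-1)$ is the rational normal curve of degree $n_1-1$, and the bound $\deg S$ on primitive (balanced) partition identities with parts in $\{1,\dots,n_1\}$ is the classical Graham--Diaconis--Sturmfels analysis recalled in \cite{GDS} and Chapter $6$ of \cite{St}. Aside from this edge case, the only care needed is bookkeeping the ambient dimension so that the degree formula comes out as $\sum_i(n_i-1)$; once that is correct the corollary is genuinely trivial, as the statement already indicates.
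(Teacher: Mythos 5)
Your proposal is correct and matches the paper's intent exactly: the paper dismisses this corollary as trivial from Theorem \ref{degGraverS}, and your argument $n_P+n_Q-2=(n_P-1)+(n_Q-1)\leq\sum_i(n_i-1)=\deg(S)$, together with $\U_\A\subseteq\G r_\A$, is precisely the omitted computation (your value $\deg(S)=n_1+\dots+n_c-c$ agrees with the paper's later remark). Your separate handling of the $c=1$ case via the Graham--Diaconis--Sturmfels bound for rational normal curves is a point the paper glosses over, and flagging it is appropriate rather than a deviation.
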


In addition, this also gives the upper bound for the degrees of any element in the universal Gr\"obner basis of any variety whose parametrization can be embedded
into that of a scroll, generalizing Corollary (6.5) from \cite{St}.
\begin{cor}
Let $X$ be any toric variety that can be obtained from a scroll by a sequence of projections to some of the coordinate hyperplanes.
Then the degree of an element of any reduced Gr\"obner  basis of $I_X$ is at most the degree of the toric variety $X$.
\end{cor}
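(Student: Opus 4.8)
The plan is to reduce the statement about an arbitrary projection-of-a-scroll variety $X$ to the bound already established for scrolls in Corollary~\ref{degUnivS}. The key structural observation is that $I_X$ is obtained from $I_S$ by eliminating some of the variables $x_{j,i}$, or equivalently, that the parametrizing matrix $\A_X$ of $X$ is obtained from the scroll matrix $\A_S$ of Lemma~\ref{parametrization} by deleting some columns. First I would make this precise: a projection of the scroll $S$ to a coordinate hyperplane $\{x_{j,i}=0\}$ corresponds, on the level of toric data, to deleting the $(j,i)$-th column of $\A_S$, so that $\A_X$ is a column-submatrix of $\A_S$ supported on a subset $T\subseteq\{(j,i)\}$ of the original variables.

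\textbf{Passing Graver bases and degrees through elimination.}
The heart of the argument is the elimination property for Graver bases. For any toric ideal, restricting to a subset $T$ of variables gives $\G r_{\A_X}=\G r_{\A_S}\cap k[x_{j,i}:(j,i)\in T]$; that is, the Graver basis of the restricted configuration consists exactly of those Graver basis elements of $\A_S$ whose support lies entirely in $T$. The same inclusion holds for the universal Gr\"obner basis, $\U_{\A_X}=\U_{\A_S}\cap k[x_{j,i}:(j,i)\in T]$. From this it follows immediately that every element of $\U_{\A_X}$ is (up to relabeling of variables) an element of $\U_{\A_S}$, and hence has degree at most the degree of $S$ by Corollary~\ref{degUnivS}.

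\textbf{Relating $\deg(S)$ to $\deg(X)$.}
The remaining point is that the bound must be expressed in terms of $\deg(X)$, not $\deg(S)$, and in general $\deg(X)\leq\deg(S)$ need not hold --- projection can change the degree. Here I would invoke the sharp bound of Theorem~\ref{degGraverS} rather than the coarse bound of Corollary~\ref{degUnivS}: the degree of any primitive binomial in $I_X$ is at most $u+v-2$, where $u-1$ and $v-1$ are the two largest relatively-prime subscroll parameters \emph{surviving in $X$}. Since these parameters are determined by the variables remaining after projection, the bound is governed by the combinatorics of the projected configuration $\A_X$ itself, and one checks that $u+v-2$ is at most the degree of $X$ for the projected variety. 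Concretely, the degree of a scroll $S(n_1-1,\dots,n_c-1)$ is $\sum_i(n_i-1)=\operatorname{codim}+1$, and the two-term bound $n_P+n_Q-2$ never exceeds this sum once $c\geq2$; the analogous comparison persists under projection because deleting variables only decreases the relevant $n_i$.

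\textbf{Main obstacle.}
The step I expect to be most delicate is the last one: making precise that the sharp degree $u+v-2$ is bounded by $\deg(X)$ rather than merely by $\deg(S)$, since projection is exactly the operation that can lower the degree of the variety. The cleanest route is to argue that under a coordinate projection the toric degree of $X$ still dominates the relevant two-parameter quantity, using that $\deg(X)$ equals the normalized volume of the point configuration $\A_X$ and that this volume is bounded below by the contribution of the two surviving directions giving $u$ and $v$. Handling the edge cases --- projections that collapse the scroll to a curve ($c=1$) or that identify points of the configuration --- will require a short separate check, but these are degenerate and do not affect the main inequality.
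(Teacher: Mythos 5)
Your elimination step is essentially the paper's: writing $\A_X$ as a column submatrix of $\A_S$ and invoking the elimination property of universal Gr\"obner bases (Proposition 4.13 and Lemma 4.6 in \cite{St}) to get $\U_{\A_X}\subseteq\U_{\A_S}\subseteq\G r_{\A_S}$, so that every reduced Gr\"obner basis element of $I_X$ has degree at most $\deg(S)$. (Only this containment is needed; your stronger claimed equality $\U_{\A_X}=\U_{\A_S}\cap k[x_{j,i}:(j,i)\in T]$ is not required and would need separate justification.)

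The genuine gap is in your final step, and it comes from missing the paper's key observation: for these coordinate projections the degree does not change at all. One normalizes so that each block of $\A_X$ retains its extreme parameters, $1=i_{k,1}<\dots<i_{k,r_k}=n_k$ (shifting the $t$-row within a block adds a multiple of the $0/1$ rows and does not alter the toric ideal), and then every deleted column of $\A_S$ lies on the segment joining the two retained endpoint columns of its block. Hence $\conv(\A_X)=\conv(\A_S)$, the normalized volumes agree, $\deg(X)=\deg(S)$, and Corollary \ref{degUnivS} closes the argument immediately. Your route instead concedes that projection ``can change the degree'' --- precisely the worry the normalization dispels --- and tries to prove $u+v-2\le\deg(X)$ directly from the sharp bound of Theorem \ref{degGraverS}. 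But the parameters $u,v$ in that theorem are attached to subscrolls of $S$, not to the surviving columns of $\A_X$, so you would first need a restricted-support version of the sharp bound and then the volume lower bound you sketch; neither is supplied, and you flag the step yourself as ``one checks.'' That unproven inequality is essentially the content of the corollary, so as written the proof is incomplete: the degree-preservation observation is the missing idea, and once it is in place the coarse bound $n_P+n_Q-2\le\sum_k(n_k-1)=\deg(S)=\deg(X)$ already suffices, making the detour through the sharp bound unnecessary.
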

\begin{proof}The claim follows from degree-preserving coordinate projections and the elimination property of the universal Gr\"obner basis.
 The variety $X=X_\A$ is parametrized by
\begin{align*}
\A = \mtx   1  &\dots &1 &1 &\dots &1 &\dots &1 &\dots &1  \\ 0  &\dots &0 &1 &\dots &1 &\dots &1 &\dots &1 \\ \vdots  & & & & & & & & &\vdots
\\ 0 &\dots &0 &0&\dots &0 &\dots &1 &\dots &1 \\ i_{1,1} &i_{1,2} \dots &i_{1,r_1} & i_{2,1}&\dots &i_{2,r_2} &\dots &i_{c,1} &\dots &i_{c,r_c} \mtxend
\end{align*}
In what follows, we may assume that $1=i_{k,1}< \dots <i_{k,r_k}=n_k$ for $1\leq k\leq c$.  Then $X$ can be obtained by coordinate projections from the
scroll $S:=S(n_1-1,\dots,n_c-1)$, parametrized by $\A_{S}$ as before.
The degree of the toric variety $X_\A$ is the normalized volume of the polytope formed by taking the convex hull of the columns of $\A$. But
$\vol(\conv(\A)) = \vol(\conv(\A_{S}))$ implies that the two varieties have the same degree.

Suppose $x^u-x^v$ is in some reduced Gr\"obner basis of $I_X$. Then Proposition 4.13. and Lemma 4.6. in \cite{St} provide that
$x^u-x^v\in\mathcal U_\A \subset \mathcal U_{\A_S}\subset \mathcal Gr_{\A_S}$.  Applying Corollary \ref{degUnivS} completes the proof.
\end{proof}

\begin{rmk}
In particular, note that this degree bound (which equals the degree of the scroll, $n_1+\dots+n_c-c$) is always better then the general one given for toric ideals in \cite{St},
Corollary 4.15, which equals $1/2 (c+2)(n_1+\dots +n_c-c-1)D(\A)$
where $D(\A)$ is the maximum over all $(c+1)$-minors of $\A$.
\end{rmk}

Let us conclude this section by listing the number of all elements in the Graver basis of some small scrolls, sorted by degree of the binomial.
The entries in this table have been obtained using the software 4ti2 \cite{4ti2}, which was essential in this project.

\begin{tabular}{|c||l|l|l|l|l|l|l|l|l|l|l|}
\hline
&\multicolumn{10}{l|}{Degrees}\\
Scroll  &2\phantom{xxx}&3\phantom{xxx}&4\phantom{xxx}&5\phantom{xxx}&6\phantom{xxx}&7\phantom{xxx}&8\phantom{xxx}&9\phantom{xxx}&10\phantom{xx}&11\phantom{xx}\\
\hline\hline
S(2,2) &7 &4 & & & & & & & & \\\hline
S(2,2,2) &18& 24& & & & & & & & \\\hline
S(4) &7& 7& 2& & & & & & & \\\hline
S(3,2) &12& 16& 4& 1& & & & & & \\\hline
S(3,2,2) &26& 58& 22& 4& & & & & & \\\hline
S(3,3) &20& 40& 18& 4& & & & & & \\\hline
S(3,3,2,2) &59& 242& 208& 36& & & & & & \\\hline
S(4,2) &19& 39& 20& 4& & & & & & \\\hline
S(4,3) &30& 86& 58& 15& 2& 1& & & & \\\hline
S(4,4) &44& 166& 146& 52& 12& 4& & & & \\\hline
S(4,3,2,2) &75& 391& 524& 176& 6& 1& & & & \\\hline
S(5,2) &28& 83& 72& 32& 4& 1& & & & \\\hline
S(6,2) &40& 157& 182& 95& 28& 4& & & & \\\hline
S(5,3) &42& 166& 174& 78& 16& 6& 1& & & \\\hline
S(6,3) &57& 290& 412& 210& 62& 14& 2& & & \\\hline
S(7,2) &55& 280& 432& 294& 130& 46& 4& 1& & \\\hline
S(5,5,5) &204& 2526& 10002& 10404& 5088& 1764& 444& 78& & \\\hline
S(6,5) &105& 813& 1678& 1136& 454& 149& 42& 12& 2& 1 \\\hline
\end{tabular}
\\

\vspace{0.1in}

\section{Universal Gr\"obner bases}

The Graver basis is a good approximation to the universal Gr\"obner basis, but they are not equal in general.  However,
extensive computations show evidence supporting the following conjecture:
\begin{conj}\label{UnivGrav}
$\mathcal U_\A = \mathcal Gr_\A$ for the defining matrix $\A$ of any rational normal scroll.
\end{conj}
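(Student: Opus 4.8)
\subsection*{A plan for attacking Conjecture \ref{UnivGrav}}

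The plan is to reduce the conjecture to a purely polyhedral statement about the fibers of the scroll, and then to verify that statement using the combinatorics of colored partition identities. Since the inclusion $\U_\A\subseteq\mathcal{Gr}_\A$ always holds (as recalled in the introduction), only the reverse inclusion needs proof. For this I would use the standard geometric description of the two sets (see \cite{St}): writing $P_b:=\conv\{w\in\mathbb{N}^n:\A w=b\}$ for the fiber polytope over $b$, a binomial $x^{u^+}-x^{u^-}$ lies in $\U_\A$ if and only if the segment $\conv\{u^+,u^-\}$ is an \emph{edge} of $P_{\A u^+}$, whereas membership in $\mathcal{Gr}_\A$ requires only that the binomial be primitive, i.e. that $u^+,u^-$ be distinct lattice points of $P_{\A u^+}$ with no smaller dividing pair. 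Thus, granting $\U_\A\subseteq\mathcal{Gr}_\A$, Conjecture \ref{UnivGrav} is \emph{equivalent} to the assertion that for scrolls every primitive chord of every fiber polytope is an edge; equivalently, that no primitive binomial has its two terms at the ends of a diagonal of a face of dimension at least two.

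Next I would make the fibers completely explicit from the parametrization of Lemma \ref{parametrization}. Reading off $\A$, two monomials lie in a common fiber if and only if, for each color $p$, they use the same number $k_p$ of variables of color $p$, and they share the same total value-sum $\sum_{p,i} i\,e_{p,i}$. Consequently the fiber polytope is a slice of a product of dilated simplices: it is $\big(\prod_{p=1}^{c}\Delta_p\big)\cap\{\text{total sum}=V\}$, where $\Delta_p$ collects the lattice points recording a multiset of $k_p$ values drawn from $\{1,\dots,n_p\}$ in color $p$. By Proposition \ref{GraverPcpiLm} the primitive binomials are exactly the color-homogeneous primitive colored partition identities supported on such a fiber, so the whole problem is now internal to these sliced products of simplices.

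The key step is the implication \emph{primitive $\Rightarrow$ edge}, which I would prove by contraposition. Suppose $\conv\{u^+,u^-\}$ is not an edge of $P_{\A u^+}$. Then its midpoint lies in the relative interior of a face of dimension at least two, which yields a balanced relation expressing $u^++u^-$ through other lattice points of the same fiber (ideally as a two-term exchange $u^++u^-=p+q$ with $p,q$ further fiber lattice points). From such a relation I would extract a proper sub-identity of the colored partition identity attached to $x^{u^+}-x^{u^-}$, that is, lattice points $v^+\le u^+$ and $v^-\le u^-$ (componentwise, not both equalities) with $x^{v^+}-x^{v^-}\in I_S$; this contradicts primitivity via Proposition \ref{GraverPcpiLm}. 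The content is to show that the exchange furnished by a non-edge can always be chosen to respect divisibility, i.e. to split off a genuine colored subpartition.

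The main obstacle is exactly this last point. Were the fiber a pure product $\prod_p\Delta_p$, every non-edge would split as an independent exchange inside a single color, and such a split trivially respects divisibility; but the single coupling constraint $\sum_{p,i} i\,e_{p,i}=V$ ties the colors together, so a non-edge exchange must simultaneously preserve every per-color count $k_p$ and the total sum $V$, and it is not a priori clear this can be done without disturbing divisibility. To handle this I would try to localize any obstruction to at most two colors, exploiting the phenomenon already visible in Theorem \ref{degGraverS} that the extremal primitive behavior is two-colored; this would reduce matters to the coupled $c\le 2$ case of sliced bi-simplices. I would then attempt an induction on the number of colors through the subscroll/elimination mechanism used in Corollary \ref{degUnivS} and the corollary following it (degree-preserving coordinate projections together with the elimination property of $\U_\A$), pushing obstructions down to subscrolls. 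Settling the coupled two-color base case---proving that there every primitive colored partition identity is an edge---is where I expect the real difficulty to lie.
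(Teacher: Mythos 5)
First, a point of order: the statement you were given is Conjecture~\ref{UnivGrav} --- the paper contains no proof of it. It offers only computational evidence (the 4ti2 table) and a reduction running in the \emph{opposite} direction from yours: the scroll is enlarged by appending $l$ colors with $n_i=2$, i.e.\ $S$ is embedded in $S(n_1-1,\dots,n_c-1,1,\dots,1)$, and the degree bound of Theorem~\ref{degGraverS} forces primitive binomials to have small support relative to the number of variables, so that the full conjecture follows from the case of scrolls of sufficiently high dimension. So your proposal cannot be matched against a paper proof; judged on its own, its polyhedral reformulation is essentially correct, but its central step is circular. Your contraposition hinges on extracting from a non-edge chord a two-term exchange $u^++u^-=p+q$ with $p,q$ lattice points of the fiber and $\{p,q\}\neq\{u^+,u^-\}$. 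But such an exchange exists \emph{if and only if} the binomial is imprimitive: given $v\in\ker\A$ with $v^+\leq u^+$, $v^-\leq u^-$, set $p:=u^+-v$ and $q:=u^-+v$ (both nonnegative and in the fiber); conversely, $v:=u^+-p=q-u^-$ satisfies $v^+\leq u^+$ and $v^-\leq u^-$. Hence ``a primitive chord admitting no two-term exchange is an edge'' is not a lemma en route to the conjecture --- it \emph{is} the conjecture, verbatim. What a non-edge actually yields is only a Carath\'eodory-type relation $N(u^++u^-)=\sum_i \lambda_i p_i$ involving several fiber points with multiplicities, and converting such a long relation into a conformal, divisibility-respecting exchange is exactly the open content; you flag this but supply no mechanism. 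A smaller slip: membership in $\U_\A$ is not equivalent to the segment being an edge alone; primitivity must be added (for $\A=[1\;\,1]$ and $u=(2,-2)$ the segment is the whole one-dimensional fiber polytope, hence a face, yet $u\notin\mathcal Gr_\A$), though this does not affect the implication you actually need.

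The proposed induction on colors is also flawed as stated. A primitive binomial may genuinely involve all $c$ colors --- e.g.\ for $S(2,2,2)$ the identity $1_1+3_2+2_3=2_1+1_2+3_3$ admits no proper color-homogeneous sub-identity, so by Proposition~\ref{GraverPcpiLm} it is a three-colored Graver element --- and Theorem~\ref{degGraverS} says only that the \emph{maximal-degree} elements are two-colored, not that obstructions localize to two colors. Moreover, restricting to the columns in the support of a binomial produces a coordinate projection of a scroll, which is in general not a subscroll. The elimination property gives $\U_{\A''}\subseteq\U_\A$ for a subconfiguration $\A''$, i.e.\ it transports universal Gr\"obner membership \emph{upward}; to exploit it you would need $\U_{\A''}=\mathcal Gr_{\A''}$ for the projected, non-scroll configurations, a strictly stronger statement than the conjecture, and one the paper pointedly avoids (for projections it claims only the degree bound of Corollary~\ref{degUnivS} and the corollary following it). This is precisely why the paper's own reduction enlarges the scroll rather than shrinking it: embedding preserves primitivity and lets the support bound bite, whereas projecting leaves the class of scrolls. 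Settling your two-color base case would therefore not, as the plan stands, feed any working induction.
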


Note that the defining ideal of $S:=S(n_1-1,\dots,n_c-1)$ is contained in the defining ideal of the scroll
$$S(n_1-1,\dots,n_c-1,\underbrace{1,\dots,1}_{l \mbox{ terms}}) $$    for any $l$.
Define $S'$ to be any such scroll, where $l$ is chosen so that the inequality
\begin{align*}
c+l + 3 > 2(n_P+n_Q-2-j_0)
\end{align*}
is satisfied, where $n_P+n_Q-2-j_0$ is the degree bound for the scroll $S'$ from  Theorem \ref{degGraverS}.
This puts a restriction on the size of the support of any primitive binomial.
Let $f\in\mathcal Gr_{\A}$.  Then $f\in I_{\A'}$ where $\A':=\A_{S'}$.  The primitivity of $f$ implies $f\in\mathcal Gr_{\A'}$.
If the conjecture is true for the scroll $S'$, then $f$ lies in the universal Gr\"obner basis of the ideal $I_{\A'}$,
and hence in the universal Gr\"obner basis of $I_\A$. \\
Therefore, to prove this conjecture, it suffices to prove a weaker one:
\begin{conj}
$\mathcal U_\A = \mathcal Gr_\A$ for rational normal scrolls of sufficiently high dimension.
\end{conj}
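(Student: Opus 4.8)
The plan is to prove the nontrivial inclusion $\mathcal Gr_\A\subseteq\U_\A$; the reverse holds for every toric ideal. By Proposition \ref{GraverPcpiLm} each element of $\mathcal Gr_\A$ is a binomial $f=x^{u^+}-x^{u^-}$ coming from a color-homogeneous pcpi, so I must exhibit, for each such $f$, one term order whose reduced Gr\"obner basis contains it. I would use the criterion built into the notion of a reduced basis: $f\in\U_\A$ precisely when there is a generic weight $w$ for which $x^{u^+}=in_w(f)$ is a minimal generator of $in_w(I_\A)$ while $x^{u^-}$ is its normal form. Unwound, this says that $u^-$ is the unique $w$-minimal lattice point of the fiber $F:=\{v\in\mathbb N^n:\A v=\A u^+\}$, and that for every variable $x_{p,i}$ dividing $x^{u^+}$ the truncation $u^+-e_{p,i}$ is the $w$-minimum of its own fiber. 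For a scroll these fibers are explicit: fixing $\A u^+$ fixes the number of parts of each color and the total value-sum, so $F$ is exactly the set of color-homogeneous identities sharing those invariants, and $\conv(F)$ is the corresponding colored-partition polytope.

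The key structural observation is that these fibers never see the inactive colors. If color $p$ contributes no part to $u^+$, then the count constraint forces every $v\in F$ to have multiplicity zero in all of $x_{p,1},\dots,x_{p,n_p}$, and the same holds for each neighboring fiber of $u^+-e_{p,i}$, since deleting one part from an active color leaves the inactive counts at zero. Thus the entire optimization defining membership of $f$ takes place inside the active subscroll $S_T$ spanned by the (few) colors $T$ actually used by $f$, of which there are at most $\tfrac12\deg f$. Combining this with the elimination property of the universal Gr\"obner basis (Proposition 4.13 and Lemma 4.6 of \cite{St}), used in both directions, one gets $f\in\U_{\A_{S'}}$ if and only if $f\in\U_{\A_{S_T}}$. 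In other words, the passage to high dimension collapses: proving the conjecture for scrolls of arbitrarily large dimension is equivalent to proving it for scrolls with a bounded number of colors, and I would carry out an induction on that number.

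The main obstacle is the base case $c=1$, the rational normal curves, where $\conv(F)$ is the partition polytope of all partitions with a fixed number of parts and a fixed sum, and the task is to show that every primitive partition identity is an edge of it — equivalently, that it meets the weight criterion of the first paragraph. This is the genuinely hard point: \cite{GDS} identifies the Graver basis of a curve with the primitive partition identities but does not decide which of them lie in $\U$, so even $c=1$ appears to be open, and a primitive identity could a priori sit in the relative interior of a higher-dimensional face of the partition polytope. I would attack it by an explicit exchange argument that, given a primitive partition identity, orders the finitely many part-values so as to force $u^-$ to be the unique optimum while keeping each one-part truncation of $u^+$ optimal, using primitivity to rule out any competing optimum; the inductive step to general $c$ should then follow from color-homogeneity, since the colored identity decouples into monochromatic blocks linked only through the single global sum. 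Any candidate criterion I would immediately test against the \texttt{4ti2} data (\cite{4ti2}) for $S(5,5,5)$ and $S(6,5)$, whose high-degree non-circuit Graver elements are precisely where a naive edge argument is most likely to break, and against the dimension of the state polytope computed in this section, which $\U_\A=\mathcal Gr_\A$ would have to match.
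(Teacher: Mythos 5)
First, a point of calibration: the statement you are asked about is a \emph{conjecture} in the paper. The paper proves nothing toward it; it only shows (via the embedding of $S$ into $S(n_1-1,\dots,n_c-1,1,\dots,1)$ and the support restriction coming from Theorem \ref{degGraverS}) that the full conjecture $\U_\A=\mathcal Gr_\A$ for all scrolls \emph{follows} from this high-dimensional version, which is then left open with only computational evidence. So the question is simply whether your proposal constitutes a proof, and it does not: you state yourself that the base case, already at $c=1$, ``appears to be open'' and is ``the genuinely hard point,'' and the exchange argument meant to settle it is only announced, never carried out. Showing that every primitive partition identity meets the weight criterion --- that $u^-$ is the unique $w$-minimum of its fiber while every truncation $u^+-e_{p,i}$ is $w$-minimal in its own fiber --- is not a technical loose end; it \emph{is} the conjecture, for curves, and your own caveat that a primitive identity could sit in the relative interior of a positive-dimensional face of the fiber polytope is exactly the scenario that has to be excluded and is not.

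There is also a structural problem with the reduction itself. Your fiber analysis (inactive colors are forced to multiplicity zero throughout the relevant fibers, so membership of $f$ in $\U_{\A_{S'}}$ is decided inside the active subscroll $S_T$) is essentially correct and consistent with the elimination facts in \cite{St}, but it runs the paper's reduction \emph{backwards}. The entire point of passing to high dimension by appending many $S(0)$ factors is the inequality $c+l+3 > 2(n_P+n_Q-2-j_0)$, which restricts the support of primitive binomials relative to the ambient rank and is the leverage one hopes to exploit. By projecting down to the active colors you discard precisely that leverage: since the number of active colors is bounded only by the degree bound $n_P+n_Q-2$, your ``bounded number of colors'' family still contains all rational normal curves $S(n-1)$ with $n$ arbitrary, i.e.\ the full strength of Conjecture \ref{UnivGrav} in its hardest known cases. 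So even granting every verifiable step, the proposal reduces the (intentionally weaker) high-dimensional statement back to the general open problem, and then stops at it. As a research plan the fiber-polytope reformulation and the proposed \texttt{4ti2} sanity checks against $S(5,5,5)$ and $S(6,5)$ are sensible, but as a proof of the stated conjecture there is a genuine and self-acknowledged gap.
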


Recently, Hemmecke and Nairn  in \cite{He} stated that if the universal Gr\"obner basis and Graver basis of $I_\A$ coincide,
then the Gr\"obner and Graver complexities of $\A$ are equal.  We plan to study the higher Lawrence configurations of the rational normal scrolls.\\

Next, we consider state polytopes of rational normal scrolls.
Knowing a universal Gr\"obner basis of $I_\A$ is equivalent to knowing its {\emph{state polytope}} (\cite{St}).  It is defined to be any polytope whose normal
fan coincides with the {\emph{Gr\"obner fan}} of the ideal. The cones of the Gr\"obner fan correspond to the reduced Gr\"obner bases $\G_\<$ of $I_\A$. In
addition, the Gr\"obner fan is a refinement of the secondary fan $\mathcal N(\Sigma(\A))$, which classifies equivalence classes of lifting functions
giving a particular regular triangulation of the point configuration $\A$.
\begin{theorem}
The dimension of the state polytope of a rational normal scroll is one less then the degree of the scroll:  $$\dim \state(I_{S(n_1-1,\dots,n_c-1)}) = n_1+\dots
+n_c-c -1.$$
\end{theorem}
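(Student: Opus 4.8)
The plan is to pass to the dual picture and read off $\dim\state(I_S)$ as the codimension of the lineality space of its normal fan. By definition the normal fan of $\state(I_{\A_S})$ is the Gr\"obner fan of $I_{\A_S}$, a complete fan in $\mathbb R^{n}$, where $n=n_1+\dots+n_c$ is the number of variables, i.e. the number of columns of $\A_S$ (each block $M_{n_j}$ uses the $n_j$ variables $x_{j,1},\dots,x_{j,n_j}$). For any polytope, its dimension equals $n$ minus the dimension of the lineality space of its normal fan. Hence it suffices to show that the lineality space $L$ of the Gr\"obner fan has $\dim L=\rank\A_S$; since Lemma \ref{parametrization} gives $\rank\A_S=c+1$, this yields $\dim\state(I_S)=n-(c+1)=n_1+\dots+n_c-c-1$.

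To pin down $L$ I would squeeze it between two copies of the row space of $\A_S$. The easy inclusion is $\mbox{rowspace}(\A_S)\subseteq L$: the ideal $I_{\A_S}$ is homogeneous with respect to the grading given by $\A_S$, so $in_{\omega}(I_{\A_S})=I_{\A_S}$ for every $\omega\in\mbox{rowspace}(\A_S)$ (equivalently $\omega\cdot u=0$ for all $u\in\ker\A_S$), and adding such an $\omega$ to any weight vector never changes an initial ideal; thus $\mbox{rowspace}(\A_S)$ lies in every Gr\"obner cone, hence in $L$. For the reverse inclusion I would use the fact, recorded just before the theorem, that the Gr\"obner fan refines the secondary fan $\mathcal N(\Sigma(\A_S))$; a finer fan has smaller lineality, so $L\subseteq L_{\mathrm{sec}}$, the lineality space of the secondary fan. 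It is standard that the lineality space of the secondary fan of a point configuration is exactly the space of affine-function height vectors, which here is $\mbox{rowspace}(\A_S)$. Chaining these gives $\mbox{rowspace}(\A_S)\subseteq L\subseteq L_{\mathrm{sec}}=\mbox{rowspace}(\A_S)$, so $L=\mbox{rowspace}(\A_S)$ and $\dim L=\rank\A_S=c+1$.

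Assembling the count then finishes the argument. The columns of $\A_S$ all have first coordinate $1$, so they lie on the affine hyperplane $\{y_1=1\}$; since they span a linear space of dimension $\rank\A_S=c+1$, their affine hull has dimension $c$, confirming that the secondary polytope $\Sigma(\A_S)$ of these $n$ points is full-dimensional in $\mathbb R^{n}/\mbox{rowspace}(\A_S)$. Combined with $L=\mbox{rowspace}(\A_S)$ this yields $\dim\state(I_S)=n-\dim L=(n_1+\dots+n_c)-(c+1)=n_1+\dots+n_c-c-1$, which is one less than the degree $n_1+\dots+n_c-c$ of the scroll.

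The step I expect to be the main obstacle is the reverse inclusion $L\subseteq\mbox{rowspace}(\A_S)$, that is, ruling out any extra lineality and thereby proving that the state polytope is genuinely full-dimensional in the quotient $\mathbb R^{n}/\mbox{rowspace}(\A_S)$. The homogeneity argument alone only delivers the upper bound $\dim\state(I_S)\le n-(c+1)$, so the matching lower bound is the substantive content. I would obtain it via the refinement of the secondary fan together with the classical formula that the secondary polytope of $n$ points affinely spanning dimension $c$ has dimension $n-c-1$; should one prefer a justification phrased entirely within \cite{St}, an alternative is to exhibit enough distinct reduced Gr\"obner bases to span the quotient directly, though the secondary-polytope route is cleaner.
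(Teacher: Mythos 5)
Your argument is correct, and it coincides with the paper's proof only in the easy half: your observation that $\mbox{rowspace}(\A_S)$ lies in the lineality space of the Gr\"obner fan is exactly the paper's upper bound, which is phrased there as the ideal being homogeneous with respect to the $c+1$ independent rows of $\A_S$, so that the vertices of the state polytope lie in $c+1$ hyperplanes. For the substantive half (the lower bound) the two routes genuinely diverge. The paper argues by induction on subscrolls: eliminating a variable takes faces of the state polytope, so $\state(I_{S(n_1-1)})$ is a face of $\state(I_{S(n_1-1,1)})$, and so on, with each added column raising the dimension by at least one; the base case $S(1,\dots,1)$ is handled explicitly, since the $2$-minors of a generic $2\times c$ matrix form a universal Gr\"obner basis that is reduced for every term order, whence the state polytope is the Minkowski sum of their Newton polytopes, the permutohedron $\Pi_{2,c}$ of dimension $c-1$ (\cite{MaxMinor1}, \cite{MaxMinor2}). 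You instead rule out extra lineality at one stroke: the Gr\"obner fan refines the secondary fan, a refinement can only shrink the lineality space, and the lineality space of the secondary fan is classically the space of affine height vectors, i.e.\ $\mbox{rowspace}(\A_S)$, because the secondary cone of the trivial subdivision is precisely that rowspace (equivalently, $\dim\Sigma(\A_S)=n-c-1$ for $n$ points affinely spanning dimension $c$). Your route is shorter and non-inductive, concentrating the work in one classical fact; but note that the paper only records the refinement statement, not the lineality computation for secondary fans, so you should cite that fact precisely (it is due to Gel'fand--Kapranov--Zelevinsky; see also Chapter 8 of \cite{St}), whereas the paper's induction stays entirely within its own toolkit and yields extra structural information for free, namely the explicit chain of state polytopes of subscrolls as faces and the permutohedron as the bottom face.
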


\begin{proof}
Eliminating variables results in taking faces of the state polytope. 
Thus the state polytope for the scroll $S(n_1-1)$ is a face of
that of $S(n_1-1,1)$, which in turn is a face of the state polytope of $S(n_1-1,2)$, etc. so that each time we add a column to the parametrization matrix
$\A_S$, the dimension of the state polytope grows by at least one.
The ideal of the scroll $S(1,\dots,1)$ is just the ideal of $2$-minors of a generic $2\times c$ matrix.  The minors form a universal Gr\"obner basis for the ideal
which is a reduced Gr\"obner basis of the ideal with respect to every term order.  Hence, the state polytope is a Minkowski sum of the Newton polytopes of the
minors (Cor. 2.9. in \cite{St}), a permutohedron $\Pi_{2,c}$ (\cite{MaxMinor1},\cite{MaxMinor2}). Its dimension is $c-1$.

By induction, $$\dim \state (S(n_1-1,\dots,n_c-1)) \geq n_1-2 + n_2-1 + \dots + n_c-1 = \sum
n_i -c- 1.$$

On the other hand, the ideal of the scroll is homogeneous with respect to the grading given by all the rows of $\A_S$.  There are $c+1$ independent rows,
thus the vertices of the state polytope lie in $c+1$ hyperplanes, and the claim follows.
\end{proof}

Let us conclude with an example.
\begin{eg}
Let $S$ be the scroll $S(5,6)$.  
Its defining ideal $I_S$ is the ideal of $2$-minors of the matrix
\begin{align*}
M:=\mtx \textcolor{blue}{x_{1}} &\dots &\textcolor{blue}{x_{5}} &\textcolor{red}{y_{1}} &\dots &\textcolor{red}{y_{6}} \\
        \textcolor{blue}{x_{2}} &\dots &\textcolor{blue}{x_{6}} &\textcolor{red}{y_{2}} &\dots &\textcolor{red}{y_{7}} \mtxend .
\end{align*}
The matrix $\A$ providing the parametrization of the scroll is
\begin{align*}
\A=\mtx \textcolor{blue}{1}&\textcolor{blue}{1}&\dots &\textcolor{blue}{1}&\textcolor{red}{1}&\dots &\textcolor{red}{1} \\
        \textcolor{blue}{0}&\textcolor{blue}{0}&\dots &\textcolor{blue}{0}&\textcolor{red}{1}&\dots &\textcolor{red}{1} \\
        \textcolor{blue}{1}&\textcolor{blue}{2}&\dots &\textcolor{blue}{6}&\textcolor{red}{1}&\dots &\textcolor{red}{7} \mtxend .
\end{align*}

 The number and degrees of elements in the universal Gr\"obner basis of the ideal $I_\A$ can be found in the Table of degrees.
The primitive colored partition identity of maximal degree is
$$ \textcolor{blue}{1_1}+\textcolor{blue}{1_1}+\textcolor{blue}{1_1}+\textcolor{blue}{1_1}+\textcolor{blue}{1_1}+\textcolor{blue}{1_1}
+ \textcolor{red}{7_2}+\textcolor{red}{7_2}+\textcolor{red}{7_2}+\textcolor{red}{7_2}+\textcolor{red}{7_2}
= \textcolor{red}{1_2}+\textcolor{red}{1_2}+\textcolor{red}{1_2}+\textcolor{red}{1_2}+\textcolor{red}{1_2}
+ \textcolor{blue}{6_1}+\textcolor{blue}{6_1}+\textcolor{blue}{6_1}+\textcolor{blue}{6_1}+\textcolor{blue}{6_1}+\textcolor{blue}{6_1}.$$
The corresponding binomial in the ideal $I_\A$ is
$$ \textcolor{blue}{x_1}^6 \textcolor{red}{y_7}^5 = \textcolor{red}{y_1}^5 \textcolor{blue}{x_6}^6 .$$
The state polytope of the ideal $I_\A$ is $10$-dimensional.

There exist primitive elements that are not circuits. In fact, using \cite{4ti2}, we can see that there is a circuit in every degree from $2$ to $11$ except degree $10$,
but the number of  circuits in each degree is considerably smaller then the number of primitive binomials.
\end{eg}


\begin{thebibliography}{12}
\bibitem{4ti2}
4ti2 team:  4ti2 -- A software package for algebraic, geometric and combinatorial problems on linear spaces.  Available at www.4ti2.de .
\bibitem{MaxMinor1}
D. Bernstein, A. Zelevinsky: {\emph{Combinatorics of Maximal Minors}}. J. Algebraic Comb. {\bf{2}}, 1993, pp. 111-121.
\bibitem{CLO}
D. Cox, J. Little, D. O'Shea: {\emph{Using Algebraic Geometry}}.  Graduate Texts in Mathematics 185, Springer-Verlag, 2005.
\bibitem{Eis}
D. Eisenbud: {\emph{Commutative algebra with a view toward algebraic geometry}}. Graduate Texts in Mathematics 150, Springer-Verlag, 1995.
\bibitem{EisHa}
D. Eisenbud, J. Harris: {\emph{On varieties of minimal degree (a centennial account)}}.
Algebraic geometry, Bowdoin, 1985 (Brunswick, Maine, 1985), 3--13, Proc. Sympos. Pure Math., 46, Part 1, Amer. Math. Soc., Providence, RI, 1987.
\bibitem{GDS}
P. Graham, R.L. Diaconis, B. Sturmfels: {\emph{Primitive partition identities}}.  In: Combinatorics, Paul Erdos is Eighty,
(eds. D.Mikl\'os, V.T. S\'os, T. Szonyi), Janos Bolyai Mathematical Society, Budapest, Hungary, 1996, pp. 173-192.
\bibitem{He}
R. Hemmecke, K. Nairn: {\emph{On the Gr\"obner complexity of matrices}}. Preprint, arXiv:0708.4392v1, 2007.
\bibitem{LeHoTh}
J. Lee, S. Ho\c{s}ten, R. Thomas: \emph{Trends in Optimization},  Proceedings of Symposia in Applied Mathematics, Volume 61, American Math. Society, 2004.
\bibitem{SaSt}
F. Santos, B. Sturmfels: {\emph{Higher Lawrence configurations}}. Journal of Combinatorial Theory, Series A, 103 (2003) 151-164.
\bibitem{St}
 B. Sturmfels:  {\emph{Gr\"{o}bner bases and convex polytopes}}.  American Mathematical Society, University Lecture Series {\bf 8}, 1996.
\bibitem{MaxMinor2}
B. Sturmfels, A. Zelevinsky: {\emph{Maximal minors and their leading terms}}. Advances in Mathematics 98 (1993) 65-112.
\bibitem{StTh}
B. Sturmfels, R. Thomas: {\emph{Variation of cost functions in integer programming}}.  Mathematical Programming 77 (1997) 357-387.
\end{thebibliography}
\end{document}